\theoremstyle{definition}
\newtheorem{thm}{Theorem}[section]
\newtheorem{defn}[thm]{Definition}
\newtheorem{prop}[thm]{Proposition}
\newtheorem{lemma}[thm]{Lemma}
\newtheorem{cor}[thm]{Corollary}
\newtheorem{example}[thm]{Example}
\newcommand{\Gal}[1]{\mbox{Gal}(#1)}
\newcommand{\cent}[2]{\mbox{Cent}_{#1}\left(#2\right)}
\renewcommand{\bar}{\overline}
\newcommand{\B}{\mathfrak B}
\newcommand{\OK}{{\mathfrak O}_{K}}
\newcommand{\OL}{{\mathfrak O}_{L}}
\newcommand{\A}{{\mathfrak A}}
\newcommand{\AH}{{\mathfrak A}_{H}}
\newcommand{\perm}[1]{\mbox{Perm}(#1)}
\newcommand{\ndivides}{ \hspace{0mm} \nmid }
\def \p  {{\mathfrak p}}
\def \OKp {{\mathfrak O}_{K,\p}}
\begin{document}

\title[Commuting Hopf-Galois Structures]{Commuting Hopf-Galois Structures on a Separable Extension}

\author{Paul J. Truman}

\address{School of Computing and Mathematics \\ Keele University \\ Staffordshire \\ ST5 5BG \\ UK}

\email{P.J.Truman@Keele.ac.uk}

\subjclass[2000]{Primary 11R33; Secondary 11S23}

\keywords{Hopf-Galois structure, Hopf-Galois module theory, Galois module structure, Associated order}

\begin{abstract}
Let $ L/K $ be a finite separable extension of local or global fields in any characteristic, let $ H_{1}, H_{2} $ be two Hopf algebras giving Hopf-Galois structures on the extension, and suppose that the actions of $ H_{1}, H_{2} $ on $ L $ commute. We show that a fractional ideal $ \B $ of $ L $ is free over its associated order in $ H_{1} $ if and only if it is free over its associated order in $ H_{2} $. We also study which properties these associated orders share. 
\end{abstract}

\maketitle

\section{Introduction and Statement of Results} \label{section_introduction}

This paper is a sequel to \cite{PJT_Canonical}. In the introduction to that paper, we described one of the ways in which Hopf-Galois module theory generalizes the classical Galois module theory of algebraic integers. Given a finite Galois extension of local or global fields $ L/K $ with Galois group $ G $, the group algebra $ K[G] $ is a Hopf algebra, and its action on $ L $ is an example of a {\em Hopf-Galois structure} on the extension $ L/K $ (see \cite[Definition 2.7]{ChTwe}). However, the extension may admit a number of other Hopf-Galois structures, and each of these provides a different context in which we can ask module theoretic questions about the extension and its fractional ideals. If $ H $ is a Hopf algebra giving a Hopf-Galois structure on $ L/K $ then $ L $ is a free $ H $-module of rank $ 1 $, (\cite[Proposition 2.15]{ChTwe} and if $ \B $ is a fractional ideal of $ L $ then we can define the associated order of $ \B $ in $ H $:
\[ \AH(\B) = \{ h \in H \mid h \cdot x \in \B \mbox{ for all } x \in \B \}, \]
and compare the structure of $ \B $ as a module over its associated orders in the various Hopf algebras. The most interesting case is $ \B = \OL $, the ring of algebraic integers of $ L $, and there exist Galois extensions $ L/K $ of $ p $-adic fields for which $ \OL $ is not free over its associated order in the group algebra $ K[G] $ but is free over its associated order in some Hopf algebra giving a different Hopf-Galois structure on the extension \cite{By97a}. On the other hand, there exist extensions admitting multiple Hopf-Galois structures for which $ \OL $ is free over its associated order in each of the corresponding Hopf algebras \cite{PJT_Noether}.  
\\ \\
Another way in which Hopf-Galois module theory generalizes classical Galois module theory is that a finite separable, but non-normal, extension of local or global fields may admit Hopf-Galois structures; here of course the techniques of classical Galois module theory are not available, but these Hopf-Galois structures allow us to study such extensions and their fractional ideals as described above.
\\ \\
A theorem of Greither and Pareigis (\cite[Theorem 3.1]{Greither_Pareigis} or \cite[Theorem 6.8]{ChTwe}) enumerates and describes all of the Hopf-Galois structures admitted by a given finite separable extension of fields $ L/K $, as follows: let $ E/K $ be the Galois closure of $ L/K $, $ G=\Gal{E/K} $, and $ G_{L} = \Gal{E/L} $. Now let $ X $ be the left coset space of $ G_{L} $ in $ G $, and $ B = \perm{X} $. Then the Hopf-Galois structures on $ L/K $ are in bijective correspondence with the regular subgroups of $ B $ normalized the image of $ G $ under the embedding $ \lambda : G \rightarrow B $ defined by $ \lambda(\sigma) gG_{L} = \sigma g G_{L} $ . (Recall that a subgroup of $ B $ is called {\em regular} if it has the same order as $ X $ and is transitive on $ X $.) Furthermore, the Hopf algebra corresponding to a regular subgroup $ N $ is $ E[N]^{G} $, where $ G $ acts on $ E $ as Galois automorphisms and on $ N $ by conjugation via the embedding $ \lambda $:
\begin{equation} \label{eqn_GP_conj}
 \,^{g}\eta = \lambda(g) \eta \lambda (g^{-1}) \mbox{ for all } g \in G, \; \eta \in N. 
\end{equation}
Finally, such a Hopf algebra acts on $ L $ by
\begin{equation}\label{eqn_GP_action} 
\left( \sum_{\eta \in N} c_{\eta} \eta \right) \cdot x = \sum_{\eta \in N} c_{\eta} \eta^{-1}(\bar{1})[x] \hspace{5mm} (c_{\eta} \in E, \; x \in L),
\end{equation}
where $ \bar{1} $ denotes the coset $ G_{L} $ in $ X $. 
\\ \\
If $ L/K $ is a Galois extension then in the notation above $ X = G $ and $ \lambda $ is the left regular embedding of $ G $ into $ \perm{G} $. In this case, one example of a regular subgroup of $ \perm{G} $ normalized by $ \lambda(G) $ is $ \rho(G) $, the image of $ G $ under the right regular embedding. Since $ \lambda(G) $ centralizes $ \rho(G) $ inside $ \perm{G} $, the Hopf-Galois structure corresponding to the regular subgroup $ \rho(G) $ has Hopf algebra isomorphic to $ K[G] $, and from equation \eqref{eqn_GP_action} we recover the usual action of $ K[G] $ on $ L $  (see \cite[Proposition 6.10]{ChTwe}). We call this Hopf-Galois structure the {\em classical structure}. If $ G $ is abelian then in fact $ \lambda(G) = \rho(G) $, but if $ G $ is nonabelian then they are distinct, and $ \lambda(G) $ is another example of a regular subgroup of $ \perm{G} $ normalized by $ \lambda(G) $, and therefore corresponds to a different Hopf-Galois structure on $ L/K $.  In \cite{PJT_Canonical} we called this the {\em canonical nonclassical structure}, and denoted the corresponding Hopf algebra $ L[\lambda(G)]^{G} $ by $ H_{\lambda} $. We proved two theorems concerning the relationship between the classical structure and the canonical nonclassical structure: an element $ x \in L $ generates $ L $ as a $ K[G] $ module if and only if it generates $ L $ as an $ H_{\lambda} $-module (\cite[Theorem 2.1]{PJT_Canonical}), and, assuming that $ \OK $ is a principal ideal domain, an ambiguous ideal of $ L $ is free over its associated order in $ K[G] $ if and only if it is free over its associated order in $ H_{\lambda} $ (\cite[Theorem 1.1]{PJT_Canonical} and its correction,  \cite{PJT_Canonical_Correction}). The proofs of these results exploited the fact that 
\[ h \cdot g(x) = g(h \cdot x) \mbox{ for all } h \in H_{\lambda}, \; g \in G, \; x \in L \]
(see \cite[Lemma 3.3]{PJT_Canonical}). In this paper we generalize the results of \cite{PJT_Canonical} to pairs of Hopf-Galois structures on a given separable (but not necessarily Galois) extension whose actions on $ L $ {\em commute}, by which we mean that 
\[ h_{1} \cdot ( h_{2} \cdot x ) = h_{2} \cdot ( h_{1} \cdot x ) \mbox{ for all } h_{1} \in H_{1}, \; h_{2} \in H_{2}, \; x \in L. \]
Furthermore, our results apply to all fractional ideals of $ L $ (not just to ambiguous ideals), and we can remove the hypothesis that $ \OK $ is a principal ideal domain.
\\ \\
Our results are as follows:

\begin{thm} \label{thm_NB_generators_intro}
Let $ L/K $ be a finite separable extension of fields and let $ H_{1},H_{2} $ be two Hopf algebras giving Hopf-Galois structures on $ L/K $ whose actions commute. Then an element $ x \in L $ generates $ L $ as an $ H_{1} $-module if and only if it generates $ L $ as an $ H_{2} $-module. 
\end{thm}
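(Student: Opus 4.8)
Since the hypotheses are symmetric in $H_1$ and $H_2$, it suffices to prove that if $x$ generates $L$ as an $H_1$-module then it generates $L$ as an $H_2$-module. The plan is to transport the $H_2$-action across the $H_1$-module isomorphism determined by $x$, and then to win by a dimension count.

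First I would record that, by \cite[Proposition 2.15]{ChTwe}, $L$ is free of rank one over each of $H_1$ and $H_2$; in particular $\dim_K H_1 = \dim_K H_2 = [L:K] = \dim_K L$. Assuming $x$ generates $L$ over $H_1$, the $K$-linear map $\phi : H_1 \to L$, $\phi(h) = h \cdot x$, is then surjective between spaces of equal finite dimension, hence a $K$-linear bijection. Because $L$ is an $H_1$-module we automatically have $\phi(ab) = a \cdot \phi(b)$, so $\phi$ identifies the left-regular $H_1$-module with $L$.

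Next I would encode the $H_2$-action in these coordinates. Define $\psi : H_2 \to H_1$ by $\psi(h_2) = \phi^{-1}(h_2 \cdot x)$; this is $K$-linear. The commuting hypothesis, together with the module axiom for $H_1$, gives for every $a \in H_1$ and $h_2 \in H_2$ the identity
\[ h_2 \cdot (a \cdot x) = a \cdot (h_2 \cdot x) = a \cdot (\psi(h_2) \cdot x) = (a\,\psi(h_2)) \cdot x, \]
so that under $\phi$ the operator $h_2$ becomes right multiplication by $\psi(h_2)$ on $H_1$.

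Finally comes the crux. Since $L$ is free of rank one over $H_2$, there is some $y \in L$ with $H_2 \cdot y = L$; writing $y = a \cdot x$ and applying the displayed identity gives $(a\,\psi(H_2)) \cdot x = L$, whence $a\,\psi(H_2) = H_1$ because $\phi$ is bijective. The main point is then the dimension estimate: the $K$-subspace $a\,\psi(H_2)$ is the image of $\psi(H_2)$ under left multiplication by $a$, so $\dim_K(a\,\psi(H_2)) \leq \dim_K \psi(H_2) \leq \dim_K H_2 = \dim_K H_1$. Equality throughout forces $\psi(H_2)$ to be a subspace of $H_1$ of full dimension, i.e. $\psi(H_2) = H_1$. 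Therefore $H_2 \cdot x = \psi(H_2) \cdot x = H_1 \cdot x = L$, so $x$ generates $L$ as an $H_2$-module, as required. I expect the only delicate step to be the bookkeeping that turns the commuting property into right multiplication by $\psi(H_2)$; once that is in place, the dimension count closes the argument cleanly and symmetrically.
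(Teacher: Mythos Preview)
Your argument is correct, and it is genuinely different from the paper's. The paper first invokes Proposition~\ref{prop_actions_commute_converse} to identify $H_{2}$ with $H_{1}^{\prime}=E[N^{\prime}]^{G}$, then passes via Lemmas~\ref{lem_fixed_generators} and~\ref{lem_transition_matrix} to a determinant criterion, and finally checks directly that $\det T_{N}(x)=\det T_{N^{\prime}}(x)$ using the regularity of $N,N^{\prime}$ and the fact that they centralize one another in $B$. In other words, the paper descends to the Greither--Pareigis description and does an explicit matrix computation.

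By contrast, your proof stays entirely at the level of $K$-modules: it uses only that $L$ is free of rank one over each $H_{i}$ and that the two actions commute, converting the $H_{2}$-action into right multiplication on $H_{1}$ via $\psi$ and then closing with a dimension count. This is more elementary and more portable: it never needs the structure theorem for commuting Hopf--Galois structures, and indeed it would apply verbatim to any pair of $K$-algebras acting on a common $K$-vector space, each making it free of rank one, with commuting actions. What the paper's route buys is extra information (the equality of the two determinants, not merely their simultaneous nonvanishing) and a concrete link to the group-theoretic picture that is reused elsewhere; what your route buys is independence from that machinery and a cleaner logical structure.
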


\begin{thm} \label{thm_main_thm_intro}
Retain the notation and hypotheses of Theorem \ref{thm_NB_generators_intro}, and assume that $ L/K $ is an extension of local or global fields (in any characteristic). Let $ \B $ be a fractional ideal of $ L $, and let $ \A_{1} $, $ \A_{2} $ be the associated orders of $ \B $ in $ H_{1} $, $ H_{2} $ respectively. Then $ \B $ is a free $ \A_{1} $-module if and only if it is a free $ \A_{2} $-module. 
\end{thm}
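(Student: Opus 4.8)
The plan is to prove that a single element of $ \B $ serves as a free generator over \emph{both} associated orders at once. Since the hypotheses are symmetric in $ H_{1} $ and $ H_{2} $ (commuting of the actions is a symmetric condition, and Theorem \ref{thm_NB_generators_intro} applies either way round), it suffices to show that freeness over $ \A_{1} $ forces freeness over $ \A_{2} $.

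First I would fix a free generator $ b $ of $ \B $ over $ \A_{1} $, so $ \B = \A_{1} \cdot b $, and promote it to a free generator of the whole module $ L $. As $ \A_{1} $ is an $ \OK $-order in $ H_{1} $ it spans $ H_{1} $ over $ K $, and $ \B $ spans $ L $ over $ K $, so $ H_{1} \cdot b = K \cdot (\A_{1} \cdot b) = K \cdot \B = L $; comparing $ K $-dimensions (both equal $ [L:K] $) upgrades this to the statement that $ b $ is a \emph{free} generator of $ L $ as an $ H_{1} $-module. Theorem \ref{thm_NB_generators_intro} now transports the property across the two structures: $ b $ is also a free generator of $ L $ as an $ H_{2} $-module. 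Consequently every element of $ \B \subseteq L $ is uniquely of the form $ h_{2} \cdot b $ for some $ h_{2} \in H_{2} $, whence $ \B = M \cdot b $, where $ M = \{\, h_{2} \in H_{2} \mid h_{2} \cdot b \in \B \,\} $.

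The heart of the argument is then to identify $ M $ with $ \A_{2} $. The inclusion $ \A_{2} \subseteq M $ is immediate, since $ b \in \B $ and $ \A_{2} $ preserves $ \B $. For the reverse inclusion I would take $ h_{2} \in M $ and an arbitrary $ \beta \in \B $, write $ \beta = a_{1} \cdot b $ with $ a_{1} \in \A_{1} $ (possible exactly because $ \B = \A_{1} \cdot b $), and invoke the commuting of the actions:
\[ h_{2} \cdot \beta = h_{2} \cdot (a_{1} \cdot b) = a_{1} \cdot (h_{2} \cdot b) \in \A_{1} \cdot \B \subseteq \B, \]
using $ h_{2} \cdot b \in \B $. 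This shows $ h_{2} \in \A_{2} $, so $ M = \A_{2} $ and $ \B = \A_{2} \cdot b $. Finally, because $ b $ is a free generator of $ L $ over $ H_{2} $, the structure map $ a \mapsto a \cdot b $ embeds $ \A_{2} $ into $ \B $, so $ \B \cong \A_{2} $ and $ \B $ is $ \A_{2} $-free.

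I expect the one genuinely delicate point to be the reverse inclusion $ M \subseteq \A_{2} $. It is tempting to try to deduce it from the bare membership $ b \in \B $, but that is insufficient: one must feed in the full freeness $ \B = \A_{1} \cdot b $ together with the commuting hypothesis, and it is precisely here that both inputs are indispensable. Everything else is routine dimension counting and manipulation of orders, and the local/global assumption enters only to give meaning to $ \OK $, $ \OL $, fractional ideals and associated orders; no localization or further arithmetic input is needed.
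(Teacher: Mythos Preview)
Your proposal is correct and follows essentially the same route as the paper. The paper's proof defines, for each $a \in \A_{1}$, the unique $z_{a} \in H_{2}$ with $z_{a}\cdot b = a\cdot b$ and shows $\A_{2} = \{\,z_{a} \mid a \in \A_{1}\,\}$; this is exactly your set $M$, and the key inclusion $M \subseteq \A_{2}$ is established by the very same commuting computation $h_{2}\cdot(a_{1}\cdot b) = a_{1}\cdot(h_{2}\cdot b)$ that you highlight.
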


\begin{cor}
If $ L/K $ is an extension of global fields then $ \B $ is a locally free $ \A_{1} $-module if and only if it is a locally free $ \A_{2} $-module. 
\end{cor}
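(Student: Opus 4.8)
The plan is to deduce this from Theorem \ref{thm_main_thm_intro} by the usual local-global principle for freeness over an order. Since $\OK$ is a Dedekind domain, $\A_{1}$ and $\A_{2}$ are $\OK$-orders, and $\B$ is an $\A_{i}$-lattice, the statement that $\B$ is locally free over $\A_{i}$ is by definition the statement that the localization $\B_{\p} = \B \otimes_{\OK} \OKp$ is a free module over $(\A_{i})_{\p} = \A_{i} \otimes_{\OK} \OKp$ for every maximal ideal $\p$ of $\OK$. Hence it suffices to prove, for each fixed $\p$, the equivalence that $\B_{\p}$ is free over $(\A_{1})_{\p}$ if and only if it is free over $(\A_{2})_{\p}$.

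To set this up I would base change the whole configuration along $K \hookrightarrow K_{\p}$. Writing $L_{\p} = L \otimes_{K} K_{\p}$ and $(H_{i})_{\p} = H_{i} \otimes_{K} K_{\p}$, the algebras $(H_{i})_{\p}$ give Hopf-Galois structures on the separable $K_{\p}$-algebra $L_{\p}$, and since the commuting identity $h_{1} \cdot (h_{2} \cdot x) = h_{2} \cdot (h_{1} \cdot x)$ is preserved under extension of scalars, the two actions on $L_{\p}$ again commute. A routine check shows that forming associated orders commutes with this localization, so that $(\A_{i})_{\p}$ is precisely the associated order of $\B_{\p}$ in $(H_{i})_{\p}$. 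With these identifications in place, the per-prime equivalence above is exactly the conclusion of Theorem \ref{thm_main_thm_intro}, now applied over the local base $K_{\p}$.

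The main obstacle is that $L_{\p} \cong \prod_{\mathfrak{P} \mid \p} L_{\mathfrak{P}}$ is in general a product of local fields rather than a single field extension of $K_{\p}$, so Theorem \ref{thm_main_thm_intro} as stated does not apply to $L_{\p}$ directly; moreover the Greither-Pareigis description shows that $(H_{i})_{\p}$ need not decompose as a product along the factors $L_{\mathfrak{P}}$, so one cannot simply split the problem prime by prime either. I expect this to be resolved by inspecting the proof of Theorem \ref{thm_main_thm_intro} in the local case: it relies only on the commuting of the two actions, the rank-one freeness of $L$ over each $H_{i}$, and module-theoretic arguments over the local ring $\OKp$, none of which use that $L_{\p}$ is a field. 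Granting that the argument therefore carries over with $L_{\p}$ an \'etale $K_{\p}$-algebra, applying it at every $\p$ gives the per-prime equivalence for all $\p$, which is precisely the asserted equivalence of local freeness.
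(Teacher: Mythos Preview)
Your proposal is correct and follows essentially the same approach as the paper: localize at each prime $\p$ of $\OK$, note that the proof of Theorem \ref{thm_main_thm_intro} does not actually require $L$ to be a field (so it applies to the \'etale $K_{\p}$-algebra $L_{\p}$), and then invoke that theorem prime by prime. The paper's proof is simply a more compressed version of what you wrote, asserting in one line the point you carefully isolate and resolve as your ``main obstacle.''
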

\begin{proof}
The proof of Theorem \ref{thm_main_thm_intro} does not depend on the fact that $ L $ is a field, so we may replace $ L $ with its completion at some prime $ \p $ of $ \OK $ (a Galois algebra). To say that $ \B $ is locally free over $ \A_{1} $ is to say that for each prime $ \p $ of $ \OK $, we have $ \B_{\p} = \OKp \otimes_{\OK} \B $ is a free $ \OKp \otimes_{\OK} \A_{1} $-module. By Theorem \ref{thm_main_thm_intro} this is equivalent to saying that for each prime $ \B_{\p} $ is a free $ \OKp \otimes_{\OK} \A_{2} $-module, which is saying that $ \B $ is a locally free $ \A_{2} $-module. 
\end{proof}

In section \ref{section_commuting_structures} we characterize Hopf-Galois structures on a given separable extension whose actions commute, using the theorem of Greither and Pareigis. In section \ref{section_NBG_module_structure} we generalize some of the results of \cite[Section 2]{PJT_Canonical}, and use these to prove Theorems \ref{thm_NB_generators_intro} and \ref{thm_main_thm_intro}. Finally, in section \ref{section_shared_properties} we ask, under the hypotheses of Theorem \ref{thm_main_thm_intro}, which algebraic properties the associated orders $ \A_{1} $ and $ \A_{2} $ share. We show that $ \A_{1} $ is a maximal order in $ H_{1} $ if and only if $ \A_{2} $ is a maximal order in $ H_{2} $ (Proposition \ref{prop_maximal_orders}), but that it is possible for $ \A_{1} $ to be a Hopf order but for $ \A_{2} $ not to have this property (Example \ref{example_hopf}).
\\ \\ 
I would like to express my gratitude to Prof. Nigel Byott, who first suggested to me that it might be possible to generalize the results of \cite{PJT_Canonical} in this way. 

\section{Commuting Hopf-Galois Structures} \label{section_commuting_structures}

Let $ L/K $ be a finite separable extension of fields. In this section we use the theorem of Greither and Pareigis to characterize Hopf-Galois structure on $ L/K $ whose actions on $ L $ commute.  Although we are principally interested in extensions of local or global fields, we do not impose this hypothesis since the the results of this section are valid more generally. We retain the hypotheses and notation used in the theorem of Greither and Pareigis, as described in the introduction: $ E/K $ is the Galois closure of $ L/K $, $ G=\Gal{E/K} $, $ G_{L} = \Gal{E/L} $, $ X = \{ gG_{L} \mid g \in G \} $, $ B = \perm{X} $, and $ \lambda : G \rightarrow B $ is defined by $ \lambda(\sigma)gG_{L} = \sigma g G_{L} $. We shall always think of a Hopf algebra produced by the theorem of Greither and Pareigis as acting via equation 
\eqref{eqn_GP_action}; hence we may refer without ambiguity to {\em the} Hopf-Galois structure given by a particular Hopf algebra. In the introduction we noted that if $ L/K $ is a Galois extension with Galois group $ G $ then $ \lambda(G) $ and $ \rho(G) $ are both regular subgroups of $ B $ normalized by $ \lambda(G) $, and therefore correspond to Hopf-Galois structures on $ L/K $. (If $ G $ is abelian then these subgroups coincide, and so both yield the same Hopf-Galois structure.) We also noted that $ \lambda(G) $ centralizes $ \rho(G) $ inside $ B $; in fact $ \lambda(G) = \cent{B}{\rho(G)} $. This relationship between $ \lambda(G) $ and $ \rho(G) $ in the Galois case is a particular example of a more general phenomenon:  

\begin{lemma} \label{lem_N_N_prime}
If $ N $ is a regular subgroup of $ B $ which is normalized by $ \lambda(G) $, then so is $ N^{\prime} = \cent{B}{N} $. 
\end{lemma}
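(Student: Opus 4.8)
The plan is to verify the two defining conditions for $ N' = \cent{B}{N} $ to correspond to a Hopf-Galois structure separately: first that $ N' $ is a regular subgroup of $ B $, and second that it is normalized by $ \lambda(G) $. The substance of the lemma lies in establishing regularity of the centralizer; the normalization condition will be a general fact of group theory and should fall out quickly.

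To show that $ N' $ is regular, I would exploit the fact that $ N $ acts simply transitively on $ X $ in order to identify $ X $ with $ N $ itself. Concretely, fix a base point $ x_{0} \in X $ and define $ \phi : N \rightarrow X $ by $ \phi(\eta) = \eta(x_{0}) $; this is a bijection precisely because $ N $ is regular. Transporting the permutation action through $ \phi $ identifies $ B = \perm{X} $ with $ \perm{N} $, and under this identification $ N $ becomes the image of its own left regular representation, since $ \mu \in N $ sends $ \eta(x_{0}) $ to $ (\mu\eta)(x_{0}) $, that is, $ \eta \mapsto \mu\eta $. A direct calculation then shows that any permutation of $ N $ commuting with every left translation must be a right translation: if $ \sigma $ commutes with all maps $ \eta \mapsto \mu\eta $, then evaluating $ \sigma(\mu\eta) = \mu\sigma(\eta) $ at $ \eta = 1 $ forces $ \sigma(\mu) = \mu\sigma(1) $, so $ \sigma $ is right multiplication by $ \sigma(1) $. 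Hence $ N' $ is precisely the image of the right regular representation of $ N $, which has order $ |N| = |X| $ and acts transitively; thus $ N' $ is regular.

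For the normalization condition, I would observe that it is a purely group-theoretic consequence of the hypothesis that $ \lambda(G) $ normalizes $ N $, namely that the normalizer of a subgroup is always contained in the normalizer of its centralizer. Explicitly, given $ g \in G $ and $ c \in N' $, for any $ \eta \in N $ we have $ \lambda(g^{-1}) \eta \lambda(g) \in N $, since $ \lambda(G) $ normalizes $ N $; therefore $ c $ commutes with it, and rearranging $ c \, \lambda(g^{-1})\eta\lambda(g) = \lambda(g^{-1})\eta\lambda(g) \, c $ shows that $ \lambda(g) c \lambda(g^{-1}) $ commutes with $ \eta $. As $ \eta $ ranges over $ N $, this proves $ \lambda(g) c \lambda(g^{-1}) \in \cent{B}{N} = N' $, so $ \lambda(G) $ normalizes $ N' $, as required.

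The only real obstacle is the centralizer computation, and even this is standard once one passes to the left regular representation via the base-point identification; everything else is routine.
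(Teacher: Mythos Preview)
Your argument is correct and follows essentially the same route as the paper: the paper's explicit description $\phi_{\eta}(\bar{g}) = \mu_{\bar{g}}(\eta(\bar{1}))$ is precisely your right regular representation under the base-point identification $\bar{g} \leftrightarrow \mu_{\bar{g}}$, and the paper likewise treats regularity and normalization separately (deferring details to \cite{Greither_Pareigis} and \cite{Kohl_4p}). Your version is simply more self-contained, and your normalization argument---the general fact that the normalizer of $N$ also normalizes $\cent{B}{N}$---is cleaner than invoking the explicit construction.
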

\begin{proof}
The subgroup $ N^{\prime} $ is explicitly constructed in \cite[Lemma 2.4.2]{Greither_Pareigis} as follows: since $ N $ is regular on $ X $, for each coset $ \bar{g} \in X $ there is a unique element $ \mu_{\bar{g}} \in N $ such that $ \mu_{\bar{g}}(\bar{1})=\bar{g} $. For each $ \eta \in N $ define $ \phi_{\eta} \in B $ by $ \phi_{\eta}(\bar{g})=\mu_{\bar{g}}(\eta(\bar{1})) $; then $ N^{\prime} = \{ \phi_{\eta} \mid \eta \in N \} $. From this it is easy to verify that $ N^{\prime} $ is a regular subgroup of $ B $. The proof that $ N^{\prime} $ is normalized by $ \lambda(G) $ appears as part of the proof of \cite[Theorem 2.5]{Greither_Pareigis}. These facts are also established in \cite[Proposition 3.2, Corollary 3.9]{Kohl_4p}.
\end{proof}

\begin{defn}
If $ N $ is a regular subgroup of $ X $ normalized by $ \lambda(G) $, and $ H $ is the Hopf algebra giving the corresponding Hopf-Galois structure, let $ N^{\prime} = \cent{B}{N} $ and $ H^{\prime} = E[N^{\prime}]^{G} $. 
\end{defn}

We record some properties of the relationships between the regular subgroups $ N $ and $ N^{\prime} $ and between the Hopf algebras $ H $ and $ H^{\prime} $:

\begin{lemma} \label{lemma_properties_of_N_opp}
Let $ N $ be  a regular subgroup of $ B $ which is normalized by $ \lambda(G) $. Then:
\begin{enumerate}
\item We may identify $ N^{\prime} $ with $ N^{opp} $,
\item $ N^{\prime} \cong N $,
\item $ \left( N^{\prime} \right)^{\prime} = N $,
\item $ \left( H^{\prime} \right)^{\prime} = H $,
\item $ N = N^{\prime} $ if and only if $ N $ is abelian,
\item The Hopf-Galois structures given by $ H $ and $ H^{\prime} $ coincide if and only if $ H $ is commutative. 
\end{enumerate}
\end{lemma}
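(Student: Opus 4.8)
The plan is to build everything on the explicit description of $N'$ recorded in the proof of Lemma \ref{lem_N_N_prime}. Writing $\mu_{\bar{g}}$ for the unique element of $N$ with $\mu_{\bar{g}}(\bar{1}) = \bar{g}$, the assignment $\bar{g} \mapsto \mu_{\bar{g}}$ is a bijection $X \to N$ which intertwines the regular action of $N$ on $X$ with left translation on $N$; in particular $\mu_{\bar{1}}$ is the identity and $\mu_{\eta(\bar{1})} = \eta$ for every $\eta \in N$, both by uniqueness of $\mu_{\bar{g}}$. Recall that the elements of $N'$ are the permutations $\phi_{\eta}$, for $\eta \in N$, defined by $\phi_{\eta}(\bar{g}) = \mu_{\bar{g}}(\eta(\bar{1}))$.

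For (i), I would compute the composite $\phi_{\eta_{1}} \circ \phi_{\eta_{2}}$ directly. Setting $\bar{g}' = \phi_{\eta_{2}}(\bar{g}) = \mu_{\bar{g}}(\eta_{2}(\bar{1}))$ and using $\mu_{\eta_{2}(\bar{1})} = \eta_{2}$ together with the intertwining property gives $\mu_{\bar{g}'} = \mu_{\bar{g}}\eta_{2}$; feeding this into the definition of $\phi_{\eta_{1}}$ yields $\phi_{\eta_{1}}(\phi_{\eta_{2}}(\bar{g})) = \mu_{\bar{g}}(\eta_{2}\eta_{1}(\bar{1})) = \phi_{\eta_{2}\eta_{1}}(\bar{g})$. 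Hence $\eta \mapsto \phi_{\eta}$ reverses multiplication, exhibiting the isomorphism $N' \cong N^{opp}$. This computation is the only genuinely calculational step, and is where I expect the main difficulty to lie: the bookkeeping of cosets and the repeated appeals to regularity (uniqueness of $\mu_{\bar{g}}$) must be handled with care, since everything afterwards is formal.

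The remaining parts follow quickly. Statement (ii) is immediate from (i), since inversion gives $N^{opp} \cong N$. For (iii), Lemma \ref{lem_N_N_prime} tells us that $N'$ is itself a regular subgroup normalized by $\lambda(G)$, so $(N')'$ is regular and hence has order $|X| = |N|$; since $N \subseteq \cent{B}{\cent{B}{N}} = (N')'$ always, comparing orders forces $(N')' = N$. Statement (iv) is then purely formal: by definition $(H')' = E[(N')']^{G} = E[N]^{G} = H$. For (v), the equality $N = N'$ reads $N = \cent{B}{N}$, which says precisely that $N$ is abelian; conversely, if $N$ is abelian then $N \subseteq \cent{B}{N} = N'$, and equality of orders gives $N = N'$.

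Finally, for (vi) I would use that the correspondence of Greither and Pareigis is a bijection between regular subgroups normalized by $\lambda(G)$ and Hopf-Galois structures on $L/K$ (with the action fixed via equation \eqref{eqn_GP_action}), so the structures given by $H$ and $H'$ coincide if and only if $N = N'$. By (v) this happens exactly when $N$ is abelian, and since $E \otimes_{K} H \cong E[N]$ by faithfully flat descent, $H$ is commutative if and only if $E[N]$ is, that is, if and only if $N$ is abelian. Chaining these equivalences completes the proof.
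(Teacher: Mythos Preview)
Your argument is correct. The paper's proof proceeds almost entirely by citation: part (i) is referred to \cite[Lemma 2.4.2]{Greither_Pareigis}, part (iii) to \cite[Lemma 3.5]{Kohl_4p}, and for the forward direction of (v) the paper invokes \cite[Proposition 3.3]{Kohl_4p}, namely that $N\cap N' = Z(N)$. By contrast you work everything out from the explicit description of $N'$ in Lemma~\ref{lem_N_N_prime}: the computation $\phi_{\eta_{1}}\phi_{\eta_{2}}=\phi_{\eta_{2}\eta_{1}}$ gives (i) directly (the paper's explicit isomorphism $\eta\mapsto\phi_{\eta^{-1}}$ for (ii) is just your anti-isomorphism composed with inversion), your double-centralizer-plus-cardinality argument for (iii) avoids the external reference, and for (v) you note simply that $N=\cent{B}{N}$ forces $N$ to centralize itself, which is more elementary than appealing to $N\cap N'=Z(N)$. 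Your treatment of (vi), invoking the Greither--Pareigis bijection together with $E\otimes_{K}H\cong E[N]$ to link commutativity of $H$ with abelianness of $N$, is in fact more explicit than the paper's one-line ``follows from (v) and the definition of $H'$''. The trade-off is that the paper's proof is shorter and points the reader to where these facts live in the literature, while yours is self-contained.
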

\begin{proof}
\begin{enumerate}
\item[]
\item See \cite[Lemma 2.4.2]{Greither_Pareigis}.
\item  In the notation of the proof of Lemma \ref{lem_N_N_prime}, the map $ \eta \mapsto \phi_{\eta^{-1}} $ is an isomorphism from $ N $ to $ N^{\prime} $. 
\item See \cite[Lemma 3.5]{Kohl_4p}.
\item This follows from part (iii) and the definition of $ H^{\prime} $.
\item If $ N  $ is abelian then $ N \subseteq N^{\prime} $, but these groups have the same order, so in fact $ N = N^{\prime} $. For the converse, we recall from \cite[Proposition 3.3]{Kohl_4p} that $ N \cap N^{\prime} = Z(N) $, the center of $ N $, so if $ N=N^{\prime} $ then $ N $ is abelian. 
\item This follows from part (v) and the definition of $ H^{\prime} $.
\end{enumerate}
\end{proof}

Since we may identify $ N^{\prime} $ with $ N^{opp} $, we may identify the group algebra $ E[N^{\prime}] $ with the opposite ring $ E[N]^{opp} $. However, we may not identify the Hopf algebra $ H^{\prime}$ with the opposite ring $ H^{opp} $, since the action of $ G $ on $ N^{opp} $ (see equation \eqref{eqn_GP_conj}) may not be the same as its action on $ N $. This is already apparent in the case that $ L/K $ is Galois, $ N = \rho(G) $ and $ N^{opp} = \lambda(G) $. Since $ \lambda(G) $ is the centralizer of $ \rho(G) $ in $ B $, the action of $ G $ on $ \rho(G) $ is trivial, and so $ L[\rho(G)]^{G} = L^{G}[\rho(G)] \cong K[G] $. However, the action of $ G $ on $ \lambda(G) $ is not trivial (the orbits are the conjugacy classes), and so $ L[\lambda(G)]^{G} \neq K[\lambda(G)] $. Therefore in this case we may not identify $ L[\lambda(G)]^{G} $ with $ K[\rho(G)]^{opp} $. 
\\ \\
We shall show that if $ H $ is a Hopf algebra giving a Hopf-Galois structure on $ L/K $ then the actions of $ H $ and $ H^{\prime} $ on $ L $ commute (Proposition \ref{prop_actions_commute}) and, conversely, that if $ H_{1} $ and $ H_{2} $ are two Hopf algebras giving Hopf-Galois structure on $ L/K $ whose actions on $ L $ commute then $ H_{2} = H_{1}^{\prime} $ (Proposition \ref{prop_actions_commute_converse}). To do this, we recall some notation employed in the proof of the theorem of Greither and Pareigis, as detailed in \cite[\S 6]{ChTwe}. Let $ M = \mbox{Map}(X,E) $, and let $ \{ u_{\bar{g}} \mid \bar{g}\in X \} $ be an $ E $-basis of mutually orthogonal idempotents (where $ \bar{g} $ denotes the left coset $ gG_{L} \in X $). That is:
\[ u_{\bar{g}}(\bar{\sigma}) = \delta_{\bar{g},\bar{\sigma}} \mbox{ for all } \bar{g},\bar{\sigma} \in X. \]
It can be shown \cite[Theorem 6.3]{ChTwe} that the $ E $-Hopf algebras giving Hopf-Galois structures on the extension of rings $ M / E $ are precisely the group algebras $ E[N] $ of regular subgroups $ N $ of $ \perm{G} $, where the group $ N $ acts on $ M $ by permuting the subscripts of the idempotents $ u_{\bar{g}} $:
\[ \eta \cdot u_{\bar{g}} = u_{\eta(\bar{g})} \mbox{ for any } \eta \in N \mbox{ and } \bar{g} \in X. \]
If in addition $ N $ is normalized by $ \lambda(G) $ then the group $ G $ acts on $ E[N] $ by acting on $ E $ as Galois automorphisms and on $ N $ by conjugation via the image of the embedding $ \lambda $ into $ \perm{X} $. It also acts on $ M $ by acting on $ E $ as Galois automorphisms and on the idempotents $ u_{\bar{g}} $ by left translation of the subscripts. Now by Galois descent we obtain that $ E[N]^{G} $ is a $ K $-Hopf algebra giving a Hopf-Galois structure on the extension of rings $ M^{G} / K $. Note also that $ E \otimes_{K} E[N]^{G} = E[N] $ and $ E \otimes_{K} M^{G} = M $. Finally, we may identify $ L $ with the fixed ring $ M^{G} $ via the $ K $-algebra isomorphism $ L \xrightarrow{\sim} M^{G} $ defined by
\begin{equation} \label{eqn_L_MG_isomorphism}
x \mapsto f_{x} =  \sum_{\bar{g} \in X} g(x) u_{\bar{g}} \mbox{ for all } x \in L. 
\end{equation}
(For $ \bar{g} \in X $, the element $ g(x) $ is well defined since $ g_{0}(t)=t $ for all $ g_{0} \in G_{L} $ and $ t \in L $). Thus $ E[N]^{G} $ gives a Hopf-Galois structure on $ L/K $, with the action of $ E[N]^{G} $ on $ L $ as given in equation (\ref{eqn_GP_action}). 
\\ \\
With this notation to hand, we show that if $ H $ is a Hopf algebra giving a Hopf-Galois structure on $ L/K $ then the actions of $ H $ and $ H^{\prime} $ on $ L $ commute. We then show that any pair of Hopf algebras giving Hopf-Galois structures on $ L/K $ whose actions on $ L $ commute arises in this way. 

\begin{prop} \label{prop_actions_commute}
Let $ H $ be a Hopf algebra giving a Hopf-Galois structure on $ L/K $, Then the actions of $ H $ and $ H^{\prime} $ on $ L $ commute. 
\end{prop}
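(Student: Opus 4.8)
The plan is to avoid computing directly with the action formula \eqref{eqn_GP_action} on $L$, and instead to work upstairs on the extension $M/E$, where the actions of $E[N]$ and $E[N']$ are given simply by permuting the subscripts of the idempotents $u_{\bar{g}}$. I would then descend to the fixed rings by Galois descent and transport the conclusion along the isomorphism \eqref{eqn_L_MG_isomorphism}.

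First I would observe that, directly from the definition $N' = \cent{B}{N}$, every element of $N$ commutes with every element of $N'$ as a permutation of $X$. Hence for $\eta \in N$, $\phi \in N'$, and any idempotent $u_{\bar{g}}$,
\[ \eta \cdot (\phi \cdot u_{\bar{g}}) = u_{\eta\phi(\bar{g})} = u_{\phi\eta(\bar{g})} = \phi \cdot (\eta \cdot u_{\bar{g}}). \]
Since $N$ and $N'$ act $E$-linearly on $M$ (they affect only the subscripts, not the coefficients in $E$), this identity extends by $E$-bilinearity to the group algebras: for $h \in E[N]$, $h' \in E[N']$ and $m \in M$, expanding $h$ and $h'$ in the bases $N$ and $N'$ and pulling the $E$-coefficients through the $E$-linear permutation actions gives $h \cdot (h' \cdot m) = h' \cdot (h \cdot m)$. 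Thus the actions of $E[N]$ and $E[N']$ on $M$ commute.

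Next I would restrict to the fixed rings. The action map of $E[N]$ on $M$ is $G$-equivariant, so $H = E[N]^{G}$ carries $M^{G}$ into itself, and the resulting action of $H$ on $M^{G}$ is precisely the restriction of the $E[N]$-action on $M$; the same holds for $H' = E[N']^{G}$ and $M^{G}$. Because the commuting identity above is valid for \emph{all} elements of $E[N]$ and $E[N']$ acting on \emph{all} of $M$, it holds in particular for $h \in H$, $h' \in H'$ acting on $m \in M^{G}$, so the actions of $H$ and $H'$ on $M^{G}$ commute. Finally, transporting along the $K$-algebra isomorphism $L \xrightarrow{\sim} M^{G}$ of \eqref{eqn_L_MG_isomorphism}, under which the action of $H$ (respectively $H'$) on $M^{G}$ corresponds to its action on $L$ via \eqref{eqn_GP_action}, yields the claim.

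I do not expect a genuine obstacle here: the entire force of the argument is the elementary observation that centralizing permutations induce commuting actions on the idempotent basis, after which everything is $E$-linearity and descent. The one point that must be stated carefully is that $H$ and $H'$ act on $M^{G}$ as honest restrictions of their actions on $M$ — this is exactly where $G$-equivariance of the action map enters — but this is already part of the Galois descent recalled before the statement, so it can be invoked rather than re-proved.
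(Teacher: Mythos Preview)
Your proposal is correct and is essentially identical to the paper's own proof: the paper also works upstairs on $M$, uses $\eta'\eta=\eta\eta'$ to conclude that $E[N]$ and $E[N']$ act commutingly on $M$, and then restricts to $G$-invariants to obtain the result on $L\cong M^{G}$. Your write-up simply spells out the idempotent computation and the $E$-linearity/descent steps in slightly more detail than the paper does.
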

\begin{proof}
Since $ \eta^{\prime} \eta = \eta \eta^{\prime} $ for all $ \eta \in N $ and $ \eta^{\prime} \in N^{\prime} $, we have $ z^{\prime} \cdot ( z \cdot f ) = z \cdot ( z^{\prime} \cdot f ) $ for all $ z \in E[N] $, $ z^{\prime} \in E[N^{\prime}] $ and $ f \in M $. Therefore it is certainly true that $ h^{\prime} \cdot ( h \cdot f ) = h \cdot ( h^{\prime} \cdot f ) $ for all $ h \in E[N]^{G} $, $ h^{\prime} \in E[N^{\prime}]^{G} $ and $ f \in M^{G} $, and so the actions of $ H $ and $ H^{\prime} $ on $ L $ commute. 
\end{proof}

\begin{prop} \label{prop_actions_commute_converse}
If $ H_{1} = E[N_{1}]^{G} $ and $ H_{2} = E[N_{2}]^{G} $ are two Hopf algebras giving Hopf-Galois structures on the extension $ L/K $ whose actions on $ L $ commute, then we have $ N_{2} = N_{1}^{\prime} $.
\end{prop}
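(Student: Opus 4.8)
The plan is to transport the commuting hypothesis, which is an assertion about the actions of $ H_{1} $ and $ H_{2} $ on $ L \cong M^{G} $, up to an assertion about the actions of the full group algebras $ E[N_{1}] $ and $ E[N_{2}] $ on the ambient module $ M = \mbox{Map}(X,E) $. Once the actions of $ E[N_{1}] $ and $ E[N_{2}] $ on $ M $ are known to commute, specializing to single group elements and a single idempotent will immediately yield $ \eta_{1} \eta_{2} = \eta_{2} \eta_{1} $ in $ B $ for all $ \eta_{1} \in N_{1} $, $ \eta_{2} \in N_{2} $, which says precisely that $ N_{2} $ centralizes $ N_{1} $. An order count then upgrades the resulting inclusion to the desired equality.

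The key mechanism is base change. Recall from the preceding discussion that $ E \otimes_{K} M^{G} = M $ and $ E \otimes_{K} H_{i} = E[N_{i}] $, so that $ M^{G} $ spans $ M $ over $ E $ and $ H_{i} $ spans $ E[N_{i}] $ over $ E $; moreover each $ \eta \in N_{i} $, and hence each $ z \in E[N_{i}] $, acts $ E $-linearly on $ M $ by permuting the idempotent basis $ \{ u_{\bar{g}} \} $. I would first observe that, for fixed $ h_{1} \in H_{1} $ and $ h_{2} \in H_{2} $, the two $ E $-linear endomorphisms $ m \mapsto h_{1} \cdot ( h_{2} \cdot m ) $ and $ m \mapsto h_{2} \cdot ( h_{1} \cdot m ) $ of $ M $ agree on $ M^{G} $: this is exactly the commuting hypothesis, using that $ h_{i} \cdot f \in M^{G} $ for $ f \in M^{G} $. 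Because $ M^{G} $ spans $ M $ over $ E $ and both maps are $ E $-linear, they agree on all of $ M $.

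Next I would bootstrap from $ H_{i} $ to $ E[N_{i}] $ by the same device. Fixing $ h_{2} \in H_{2} $ and $ m \in M $, the maps $ z_{1} \mapsto z_{1} \cdot ( h_{2} \cdot m ) $ and $ z_{1} \mapsto h_{2} \cdot ( z_{1} \cdot m ) $ are $ E $-linear in $ z_{1} $ and agree for $ z_{1} \in H_{1} $; since $ H_{1} $ spans $ E[N_{1}] $ over $ E $, they agree for all $ z_{1} \in E[N_{1}] $. Repeating the argument to enlarge $ h_{2} $ to an arbitrary $ z_{2} \in E[N_{2}] $, I obtain $ z_{1} \cdot ( z_{2} \cdot m ) = z_{2} \cdot ( z_{1} \cdot m ) $ for all $ z_{1} \in E[N_{1}] $, $ z_{2} \in E[N_{2}] $, $ m \in M $. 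Taking $ z_{1} = \eta_{1} \in N_{1} $, $ z_{2} = \eta_{2} \in N_{2} $ and $ m = u_{\bar{g}} $ gives $ u_{\eta_{1} \eta_{2}(\bar{g})} = u_{\eta_{2} \eta_{1}(\bar{g})} $ for every $ \bar{g} \in X $, whence $ \eta_{1} \eta_{2} = \eta_{2} \eta_{1} $ in $ B $. Thus $ N_{2} \subseteq \cent{B}{N_{1}} = N_{1}^{\prime} $, and since $ N_{2} $ and $ N_{1}^{\prime} $ are both regular subgroups of $ B $ they have the same cardinality $ |X| $, forcing $ N_{2} = N_{1}^{\prime} $.

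The main obstacle is the passage from the $ K $-rational data to the $ E $-rational data: the commuting relation is hypothesized only for the $ K $-Hopf algebras $ H_{i} $ acting on the $ K $-algebra $ M^{G} $, whereas I need it for the $ E $-group algebras $ E[N_{i}] $ acting on $ M $ in order to isolate individual permutations. The point to get right is that every map in sight is $ E $-linear and that the relevant $ K $-structures are $ E $-spanning, so that the two base-change steps above are legitimate; this is where the identities $ E \otimes_{K} M^{G} = M $ and $ E \otimes_{K} H_{i} = E[N_{i}] $ do the real work. After that, the reduction to a statement about commuting permutations and the final order count are routine.
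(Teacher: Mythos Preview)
Your proof is correct and follows essentially the same route as the paper: base-change the commuting hypothesis from $H_{i}$ acting on $M^{G}$ to $E[N_{i}]$ acting on $M$, then specialize to group elements and idempotents to deduce $N_{2} \subseteq \cent{B}{N_{1}} = N_{1}^{\prime}$, and finish by the order count. The only difference is that you spell out the $E$-linearity and spanning arguments behind the base-change step, whereas the paper simply invokes $E \otimes_{K} M^{G} = M$ and $E \otimes_{K} H_{i} = E[N_{i}]$ in one line.
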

\begin{proof}
Since the actions of $ H_{1}, H_{2} $ on $ L $ commute, the actions of $ H_{1}, H_{2} $ on $ M^{G} $ commute. Since $ E \otimes_{K} M^{G} = M $ and $ E \otimes_{K} H_{i} = E[N_{i}] $ for $ i=1,2 $, this implies that the actions of $ E[N_{1}], E[N_{2}] $ on $ M $ commute. Therefore for all $ \eta_{1} \in N_{1} $, $ \eta_{2} \in N_{2} $ we have
\begin{eqnarray*}
&& \eta_{1} \eta_{2} u_{\bar{g}}  = \eta_{2} \eta_{1} u_{\bar{g}}  \mbox{ for all } \bar{g} \in X \\
&\Rightarrow & \eta_{1}^{-1} \eta_{2}^{-1} \eta_{1} \eta_{2} \bar{g} =\bar{g}  \mbox{ for all } \bar{g} \in X \\
&\Rightarrow & \eta_{1}^{-1} \eta_{2}^{-1} \eta_{1} \eta_{2}  = 1  \\
&\Rightarrow &  \eta_{1} \eta_{2}  = \eta_{2} \eta_{1}. 
\end{eqnarray*}
Therefore $ N_{2} \subset N_{1}^{\prime} $. But $ N_{2} $ and $ N_{1}^{\prime} $ have the same order, so in fact they must be equal. 
\end{proof}

\section{Normal Basis Generators and Integral Module Structure} \label{section_NBG_module_structure}

We continue to assume that $ L/K $ is a finite separable extension of fields, and retain the notation established in section \ref{section_introduction} concerning the statement of the theorem of Greither and Pareigis, as well as the elements of its proof that we used in section \ref{section_commuting_structures}. In this section we prove Theorems \ref{thm_NB_generators_intro} and \ref{thm_main_thm_intro}, which are our main results. We require slight generalizations of two lemmas from \cite{PJT_Canonical}:

\begin{lemma} \label{lem_fixed_generators}
Let $ N $ be a regular subgroup of $ \perm{X} $ normalized by $ \lambda(G) $, so that $ E[N]^{G} $ is a $ K $-Hopf algebra giving a Hopf-Galois structure on $ M^{G} / K $. An element $ f \in M^{G} $ is an $ E[N]^{G} $-generator of $ M^{G} $ if and only if it is an $ E[N] $-generator of $ M $. 
\end{lemma}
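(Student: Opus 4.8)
The plan is to reduce the statement to a question about a single $K$-linear map and then settle it by faithfully flat base change along $E/K$. First I would reformulate the two generator conditions. Since $E[N]^G$ gives a Hopf-Galois structure on $M^G/K$, the module $M^G$ is free of rank one over $E[N]^G$; moreover $\dim_K E[N]^G = \dim_K M^G = |N|$ (both follow from $E \otimes_K E[N]^G = E[N]$ and $E \otimes_K M^G = M$, together with the fact that $|X| = |N|$ for a regular subgroup $N$). Consequently $f$ is an $E[N]^G$-generator of $M^G$ if and only if the $K$-linear map $\Phi : E[N]^G \to M^G$, $h \mapsto h \cdot f$, is surjective, which by the equality of dimensions is the same as $\Phi$ being an isomorphism. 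In exactly the same way, $f$, regarded as an element of $M$, is an $E[N]$-generator of $M$ if and only if the $E$-linear map $\Psi : E[N] \to M$, $z \mapsto z \cdot f$, is an isomorphism, both spaces having $E$-dimension $|N|$.

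Next I would compare $\Phi$ and $\Psi$ by base change. Applying $E \otimes_K -$ to $\Phi$ and using the identifications $E \otimes_K E[N]^G = E[N]$ and $E \otimes_K M^G = M$ produces an $E$-linear map $E[N] \to M$, and the crux of the argument is to check that this base-changed map is precisely $\Psi$. This follows from three features built into the construction preceding the lemma: the action of $E[N]^G$ on $M^G$ is the restriction of the action of $E[N]$ on $M$ (the action map is $G$-equivariant, so fixed points are carried to fixed points); the latter action is $E$-linear; and the descent isomorphisms are the natural scaling maps $c \otimes h \mapsto ch$ and $c \otimes f \mapsto cf$. A typical element of $E[N]$ has the form $ch$ with $c \in E$ and $h \in E[N]^G$, and $E$-linearity gives $(ch) \cdot f = c(h \cdot f)$, which is exactly the image of $c \otimes h$ under $E \otimes_K \Phi$; since such elements $E$-span $E[N]$ and both maps are $E$-linear, they coincide.

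Finally I would invoke faithful flatness. Because $E$ is a finite separable extension of $K$ it is free, and hence faithfully flat, as a $K$-module; equivalently, on choosing $K$-bases the map $\Phi$ is represented by a square matrix over $K$ whose determinant is unchanged on passage to $E$, so $\Phi$ is an isomorphism if and only if $E \otimes_K \Phi = \Psi$ is an isomorphism. Combining this with the two reformulations yields the desired equivalence. I expect the only genuinely delicate point to be the identification of $E \otimes_K \Phi$ with $\Psi$: one must verify that the $E[N]^G$-action on $M^G$ really is the restriction of the $E[N]$-action on $M$, so that no twist is introduced by the descent, but this is exactly the compatibility recorded in the construction above.
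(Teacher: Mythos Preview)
Your argument is correct and is the standard faithfully-flat-descent proof of this type of statement: reformulate each generator condition as the bijectivity of an evaluation map, identify $E\otimes_K\Phi$ with $\Psi$ via the descent isomorphisms $E\otimes_K E[N]^G=E[N]$ and $E\otimes_K M^G=M$, and conclude by faithful flatness of $E/K$. The paper does not spell out a proof here, merely citing \cite[Lemma~2.2]{PJT_Canonical} and noting that the same argument works; your write-up supplies exactly those details, and the approach is essentially the one intended.
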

\begin{proof}
This is a slight generalization of \cite[Lemma 2.2]{PJT_Canonical}, with essentially the same proof. 
\end{proof}

\begin{lemma} \label{lem_transition_matrix}
Fix orderings of the set $ X $ and the group $ N $. For $ x \in L $, the element $ f_{x} $ is an $ E[N] $-generator of $ M $ if and only if the matrix 
\[ T_{N}(x) = ( \eta(\bar{g})[x] )_{\eta \in N, \; \bar{g} \in X} \]
is nonsingular. 
\end{lemma}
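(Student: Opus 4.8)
The plan is to connect the abstract condition "$f_x$ generates $M$ as an $E[N]$-module" to a concrete linear-algebra statement about a matrix, by exploiting the fact that $E[N]$ is a free $E$-module with the group elements of $N$ as a basis, and that $M = \mathrm{Map}(X,E)$ is a free $E$-module with the idempotents $\{u_{\bar g}\}$ as a basis. Since $|N| = |X|$ (regularity), both are free $E$-modules of the same rank, say $m = |X|$. To say that $f_x$ is an $E[N]$-generator of $M$ is precisely to say that the $E$-linear map $E[N] \to M$ given by $z \mapsto z \cdot f_x$ is surjective, hence (equal ranks, $E$ a field) an isomorphism, hence represented by a nonsingular matrix with respect to the chosen bases. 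So the whole statement reduces to computing the matrix of this map.

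First I would compute, for a fixed $\eta \in N$, the element $\eta \cdot f_x \in M$ and express it in the idempotent basis. Recall from equation \eqref{eqn_L_MG_isomorphism} that $f_x = \sum_{\bar g \in X} g(x)\, u_{\bar g}$, and that $N$ acts on $M$ by $\eta \cdot u_{\bar g} = u_{\eta(\bar g)}$. Therefore
\[
\eta \cdot f_x = \sum_{\bar g \in X} g(x)\, u_{\eta(\bar g)} = \sum_{\bar h \in X} \left( \eta^{-1}(\bar h) \right)[x]\, u_{\bar h},
\]
after reindexing by $\bar h = \eta(\bar g)$. Thus the coefficient of $u_{\bar h}$ in $\eta \cdot f_x$ is $(\eta^{-1}(\bar h))[x]$. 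The transition matrix whose columns express the images $\{\eta \cdot f_x\}_{\eta \in N}$ in the basis $\{u_{\bar h}\}_{\bar h \in X}$ therefore has entries $(\eta^{-1}(\bar h))[x]$, indexed by $\eta \in N$ and $\bar h \in X$.

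I would then observe that $f_x$ is an $E[N]$-generator of $M$ if and only if these $|N| = m$ images span $M$ over $E$, equivalently if and only if this matrix is nonsingular. The final point is to reconcile this with the stated matrix $T_N(x) = (\eta(\bar g)[x])_{\eta, \bar g}$: my computation yields entries $(\eta^{-1}(\bar h))[x]$, which differs by replacing $\eta$ with $\eta^{-1}$. Since $\eta \mapsto \eta^{-1}$ is a bijection of $N$, passing from the matrix $(\eta^{-1}(\bar g)[x])$ to $(\eta(\bar g)[x])$ merely permutes the rows, which does not affect nonsingularity. Hence $f_x$ is an $E[N]$-generator of $M$ if and only if $T_N(x)$ is nonsingular.

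\emph{The main obstacle} I anticipate is bookkeeping rather than conceptual: keeping the coset-versus-element notation straight (writing $\eta(\bar g)[x]$ to mean "apply the automorphism $g'$ where $g' G_L = \eta(\bar g)$, then evaluate at $x$", which is well-defined exactly as noted after equation \eqref{eqn_L_MG_isomorphism}), and correctly tracking the inverse and the reindexing so that the row permutation argument is clean. One should also confirm that the equivalence "generates $\iff$ the spanning map is surjective $\iff$ the matrix is nonsingular" is legitimate over the field $E$ using equality of $E$-dimensions; this is where the regularity hypothesis $|N| = |X|$ is essential.
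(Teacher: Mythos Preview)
Your proposal is correct and is precisely the expected argument: the paper itself does not spell out a proof but merely cites \cite[Lemma~2.3]{PJT_Canonical}, noting that the orderings affect $T_N(x)$ only up to row/column permutations. Your computation of $\eta\cdot f_x$, the reindexing via $\bar h=\eta(\bar g)$, and the observation that the discrepancy $\eta\leftrightarrow\eta^{-1}$ is a row permutation (together with a transpose) are exactly the ingredients of that standard argument, and your use of regularity to get $\dim_E E[N]=\dim_E M$ so that surjectivity is equivalent to nonsingularity is the right justification.
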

\begin{proof}
This is a slight generalization of \cite[Lemma 2.3]{PJT_Canonical}, with essentially the same proof. We note that although the definition of the matrix  $ T_{N}(x) $ depends on the orderings of $ G $ and $ N $, the question of whether it is nonsingular does not. 
\end{proof}

We now use the characterization of commuting Hopf-Galois structures from section \ref{section_commuting_structures} to prove Theorem \ref{thm_NB_generators_intro}: which generalizes \cite[Theorem 1.2]{PJT_Canonical}:

\begin{proof}[Proof of Theorem \ref{thm_NB_generators_intro}.]
By the theorem of Greither and Pareigis, $  H_{1} = E[N]^{G} $ for some regular subgroup $ N $ of $ X $ normalized by $ \lambda(G) $, and by Proposition  \ref{prop_actions_commute_converse} we have $ H_{2} = H_{1}^{\prime} = E[N^{\prime}]^{G} $. Thus by Lemmas \ref{lem_fixed_generators} and \ref{lem_transition_matrix} it is sufficient to show that $ T_{N}(x) $ is nonsingular if and only if $ T_{N^{\prime}}(x) $ is nonsingular. We have:
\begin{eqnarray*}
\det( T_{N}(x)) & = & \det \left( ( \eta(\bar{g})[x] )_{\eta \in N, \; \bar{g} \in X} \right) \\
& = & \det \left( ( \eta(\eta^{\prime}(\bar{1_{G}}))[x] )_{\eta \in N, \; \eta^{\prime} \in N^{\prime}} \right) \mbox{ since } N^{\prime} \mbox{ is regular on } X \\
& = & \det \left( ( \eta^{\prime}(\eta(\bar{1_{G}}))[x] )_{\eta \in N, \; \eta^{\prime} \in N^{\prime}} \right) \mbox{ since } N, N ^{\prime} \mbox{ commute inside } B \\
& = & \det \left( ( \eta^{\prime}(\bar{g})[x] )_{\eta^{\prime} \in N^{\prime}, \; \bar{g} \in X}  \right) \mbox{ since } N \mbox{ is regular on } X \\
& = & \det( T_{N^{\prime}}(x))
\end{eqnarray*}
Therefore $ T_{N}(x) $ is nonsingular if and only if $ T_{N^{\prime}}(x) $ is nonsingular, which completes the proof. 
\end{proof}

The results we have established so far are valid for any finite separable extension of fields. We now turn to questions of integral Hopf-Galois module structure in a finite separable extension of local or global fields $ L/K $. Note, though, that we make no restriction on the characteristic of $ K $. Using the notation established in section \ref{section_commuting_structures}, we will prove Theorem \ref{thm_main_thm_intro}, which generalizes \cite[Theorem 1.1]{PJT_Canonical}.  First we note that if $ H $ is a Hopf algebra giving a Hopf-Galois structure on $ L/K $ and $ \B $ is a fractional ideal of $ L $ then $ \B $ has an associated order in $ H $: since $ L $ is a free $ H $-module of rank $ 1 $ (\cite[Proposition 2.15]{ChTwe}), we can identify $ \B $ with a full $ \OK $-lattice in $ H $, and the left multiplier ring of this lattice is an order in $ H $ \cite[Chapter 2, \S 8]{Reiner_MO}, which we can identify with the associated order of $ \B $ in $ H $. Thus the assumption in \cite[Theorem 1.1]{PJT_Canonical} that the fractional ideal $ \B $ is an ambiguous ideal of $ L $ is superfluous. 

\begin{proof}[Proof of Theorem \ref{thm_main_thm_intro}.]
Let $ H_{1}=H $; then by Proposition  \ref{prop_actions_commute_converse} we have $ H_{2} = H^{\prime} $. Suppose that $ x \in \B $ generates $ \B $ as an $ \AH $-module. Then $ x $ generates $ L $ as an $ H $-module, and so by Theorem \ref{thm_NB_generators_intro} it generates $ L $ as an $ H^{\prime} $-module. Therefore for each $ a \in \AH $ we may define $ z_{a} \in H^{\prime} $ by $ z_{a} \cdot x = a \cdot x $. We claim that
\[ \AH^{\prime} = \{ z_{a} \mid a \in \AH \}. \]
First we show that $ z_{a} \in \AH^{\prime} $ for each $ a \in \AH $. Let $ a \in \AH $ and $ b \in \B $. Since $ \B $ is a free $ \AH $-module, there exists a unique $ w \in \AH $ such that $ b = w \cdot x $. Now we have:
\begin{eqnarray*}
z_{a} \cdot b & = & z_{a} \cdot ( w \cdot x ) \\
& = & w \cdot ( z_{a} \cdot x ) \mbox{ since the actions of } H, H^{\prime} \mbox{ on } L \mbox{ commute}\\
& = & w \cdot ( a \cdot x ),
\end{eqnarray*} 
and this lies in $ \B $ since $ a,w \in \AH $. Therefore $ z_{a} \in \AH^{\prime} $. 
\\ \\
On the other hand, if $ z \in \AH^{\prime} $ then $ z \cdot x = z_{a} \cdot x $ for some $ a \in  \AH $, and this implies that $ z = z_{a} $ because $ x $ generates $ L $ as a free $ H^{\prime} $-module. Therefore $ \AH^{\prime} = \{ z_{a} \mid a \in \AH \} $, as claimed. 
\\ \\
Since $ (H^{\prime})^{\prime} = H $, the converse statement follows by interchanging the roles of $ H, H^{\prime} $ in the argument above.
\end{proof}

In \cite{PJT_Canonical}, we split the proof of Theorem 1.1 into two propositions: Proposition 4.1 and Proposition 4.2. In the proofs of those propositions we assumed that certain $ \OK $ orders in certain $ K $-algebras possessed $ \OK $-bases. Although this is automatically true if $ L/K $ is an extension of local fields, it need not be true if $ L/K $ is an extension of global fields. In order for these proofs to be correct as written, we therefore issued a correction (see \cite{PJT_Canonical_Correction}) adding the hypothesis that $ \OK $ must be a principal ideal domain. In the proof of Theorem \ref{thm_main_thm_intro} above, we made no such assumptions about $ \OK $-bases, and so this hypothesis is no longer needed. In particular,  \cite[Theorem 1.1]{PJT_Canonical} is valid as stated. 

\section{Shared Properties of Associated Orders} \label{section_shared_properties}

In this section we adopt the hypotheses of Theorem \ref{thm_main_thm_intro}: $ L/K $ is a finite separable extension of local or global fields, $ \B $ is a fractional ideal of $ L $, $ H $ is a Hopf algebra giving a Hopf-Galois structure on $ L/K $, and $ H^{\prime} $ is the Hopf algebra whose action on $ L $ commutes with that of $ H $. We have seen that $ \B $ is free over $ \AH $ if and only if it is free over $ \AH^{\prime} $, but we might wonder whether these orders share any algebraic properties. If $ K $ has characteristic zero, then the question of whether $ \AH $ is a Hopf order in $ H $ is particularly interesting, since in this case $ \OL $ is locally free over $ \AH $ by a theorem of Childs \cite[Theorem 13.4]{ChTwe}. However, there are examples of extensions for which $ \AH $ is a Hopf order but $ \AH^{\prime} $ is not: 

\begin{example} \label{example_hopf}
Let $ p,q,r $ be prime numbers with $ r \equiv 1 \pmod{q} $ and $ p \ndivides qr $, and let $ L/K $ be a tamely ramified Galois extension of $ p $-adic fields whose Galois  group $ G $ is isomorphic to the metacyclic group of order $ qr $. For a fixed integer $ d $ having order $ q $ modulo $ r $, we may write
\[ G = \langle \sigma, \tau \mid \sigma^{r} = \tau^{q} = 1, \; \tau \sigma = \sigma^{d} \tau \rangle. \] 
The extension $ L/K $ is Galois with nonabelian Galois group, so it admits the classical Hopf-Galois structure, with Hopf algebra $ K[G] $, and the canonical nonclassical structure, with Hopf algebra $ H_{\lambda} = L[\lambda(G)]^{G} $. Since $ L/K $ is tamely ramified, Noether's Theorem implies that $ \OL $ is a free module over $ \OK[G] $, which is a Hopf order in $ K[G] $. By Theorem \ref{thm_main_thm_intro}, $ \OL $ is free over its associated order $ \A_{\lambda} $ in $ H_{\lambda} $, but we shall show that this is not a Hopf order in $ H_{\lambda} $. 
\\ \\
By \cite[Proposition 2.5]{PJT_Noether}, we have $ \OL[\lambda(G)]^{G} \subseteq \A_{\lambda} $. We begin by showing that $ \OL[\lambda(G)]^{G} $ is not a Hopf order in $ H_{\lambda} $. By \cite[Corollary 2.2]{PJT_Integral} $ \OL[\lambda(G)]^{G} $ is a Hopf order in $ H_{\lambda} $ if and only if the inertia subgroup of $ G $ is contained in the center of $ G $. Since $ G $ is nonabelian, the extension $ L/K $ is neither unramified nor totally ramified, and so the inertia subgroup must be the unique nontrivial normal subgroup of $ G $, which is $ \langle \sigma \rangle $. But it is easy to see that $ G $ has trivial center, so the inertia subgroup is not contained in the center of $ G $, and so $ \OL[\lambda(G)]^{G} $ is not a Hopf order in $ H_{\lambda} $. 
\\ \\
Next we suppose that $ \A_{\lambda} $ is a Hopf order in $ H_{\lambda} $ properly containing $ \OL[\lambda(G)]^{G} $. Then $ \OL \otimes_{\OK} \A_{\lambda} $ is a Hopf order in $ L \otimes_{K} L[\lambda(G)]^{G} = L[\lambda(G)] $ properly containing $ \OL[\lambda(G)] $. But $ |G| = qr $ and $ p \ndivides qr $, so $ L[\lambda(G)] $ has no Hopf orders apart from $ \OL[\lambda(G)] $, by \cite[Corollary 20.3]{ChTwe}. Therefore $ \A_{\lambda} $ is not a Hopf order in $ H_{\lambda} $. 
\end{example}

This example illustrates that Theorem \ref{thm_main_thm_intro} has the potential to extend the scope of  Childs' Theorem: if $ L/K $ is a finite separable extension of local or global fields in characteristic zero and $ H $ is a Hopf algebra giving a Hopf-Galois structure on the extension then $ \OL $ is locally free over its associated orders in both $ H $ and $ H^{\prime} $ if either of these is a Hopf order.    
\\ \\
The question of whether $ \AH $ is a maximal order in $ H $ is also of interest, and this is the subject of our final result. We shall require $ H $ to be a separable $ K $ algebra, so we first note a sufficient condition for this to occur:

\begin{lemma}\label{lem_H_separable}
Suppose that the characteristic of $ K $ does not divide $ [E:K] $. Then $ H $ is a separable $ K $-algebra. 
\end{lemma}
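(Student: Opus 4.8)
The plan is to reduce the separability of $H$ to the semisimplicity of a group algebra by base change, and then to invoke Maschke's theorem. Recall from Section~\ref{section_commuting_structures} that if $H = E[N]^{G}$, where $N$ is the regular subgroup of $\perm{X}$ corresponding to $H$, then $E \otimes_{K} H = E[N]$. First I would pass all the way up to an algebraic closure $\bar{K}$ of $K$. Since $E/K$ is a finite extension we may fix an embedding $E \hookrightarrow \bar{K}$, and then
\[
\bar{K} \otimes_{K} H \;\cong\; \bar{K} \otimes_{E} \left( E \otimes_{K} H \right) \;=\; \bar{K} \otimes_{E} E[N] \;\cong\; \bar{K}[N].
\]
The relevant criterion for separability is that a finite-dimensional $K$-algebra $A$ is separable over $K$ if and only if $\bar{K} \otimes_{K} A$ is semisimple (see \cite{Reiner_MO}, \S 7). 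Applying this with $A = H$, it suffices to show that the group algebra $\bar{K}[N]$ is semisimple.

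The second step is purely group-algebraic: by Maschke's theorem, $\bar{K}[N]$ is semisimple if and only if the characteristic of $\bar{K}$ does not divide the order $|N|$. Since $\bar{K}$ and $K$ have the same characteristic, this reduces the whole question to a statement about the characteristic of $K$ and the integer $|N|$.

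Finally I would carry out the arithmetic. As $N$ is regular on $X$ we have $|N| = |X| = [L:K]$, and since $K \subseteq L \subseteq E$ this divides $[E:K]$. By hypothesis the characteristic of $K$ does not divide $[E:K]$ (this is vacuous in characteristic zero), and hence it does not divide $|N|$. Therefore $\bar{K}[N]$ is semisimple, and so $H$ is a separable $K$-algebra, as required. I do not expect a genuine obstacle here: the only points requiring care are the identification $\bar{K} \otimes_{K} H \cong \bar{K}[N]$ via the fixed embedding $E \hookrightarrow \bar{K}$, and citing the separability-via-algebraic-closure criterion in the correct form; everything else is the elementary divisibility observation $[L:K] \mid [E:K]$.
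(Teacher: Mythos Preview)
Your argument is correct and shares its core with the paper's proof: both use the identification $E \otimes_{K} H = E[N]$, the divisibility $|N| = [L:K] \mid [E:K]$, and Maschke's theorem. The difference lies in how separability is descended to $K$. The paper stops at $E$, notes that $E[N]$ is a separable $E$-algebra, and then invokes \cite[Chapter~II, Corollary~1.10]{Demeyer_Ingraham} to conclude that $H = E[N]^{G}$ is separable over $K$; this requires checking that $K$ is a $K$-direct summand of $E$, which is done via the normalized trace $x \mapsto [E:K]^{-1}\,\mbox{Tr}_{E/K}(x)$. Your route---going all the way to $\bar{K}$ and using the criterion that a finite-dimensional $K$-algebra is separable if and only if its base change to the algebraic closure is semisimple---is more direct and sidesteps both the De~Meyer--Ingraham citation and the trace argument. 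The paper's approach has the minor virtue of staying within finite extensions and making the role of the trace explicit, but yours is shorter and uses the hypothesis on the characteristic only once.
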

\begin{proof}
Recall from the theorem of Greither and Pareigis that $ H = E[N]^{G} $ for some regular subgroup $ N $ of $ \perm{X} $ normalized by $ \lambda(G) $ and that we have $ E \otimes_{K} E[N]^{G} = E[N] $. Since $ N $ is regular, we have $ |N|=[L:K] $, which divides $ [E:K] $, so the characteristic of $ E $ does not divide $ |N| $, and so $ E \otimes_{K} E[N]^{G} = E[N] $ is a separable $ E $-algebra. By \cite[Chapter II, Corollary 1.10]{Demeyer_Ingraham}, this implies that $ E[N]^{G} $ is a separable $ K $-algebra provided that  $ K $ is a $ K $-direct summand of $ E $. The map $ E \rightarrow K $ defined by
\[ x \mapsto \frac{1}{[E:K]}\mbox{Tr}_{E/K}(x) \]
is surjective, $ K $-linear, and split by the inclusion $ K \hookrightarrow E $, so $ K $ indeed occurs as a $ K $-direct summand of $ E $, and so $ E[N]^{G} $ is a separable $ K $-algebra.
\end{proof}

\begin{prop} \label{prop_maximal_orders}
Suppose that the characteristic of $ K $ does not divide $ [E:K] $. Let $ \B $ be a fractional ideal of $ L $, and let $ \AH $, $ \AH^{\prime} $ be the associated orders of $ \B $ in $ H $, $ H^{\prime} $ respectively. Then $ \AH $ is a maximal order in $ H $ if and only if $ \AH^{\prime} $ is a maximal order in $ H^{\prime} $. 
\end{prop}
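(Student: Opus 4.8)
The plan is to realise both associated orders as endomorphism rings attached to the single module $\B$, and then to invoke the structure theory of maximal orders over Dedekind domains. First I would record that the hypothesis on the characteristic makes both $H$ and $H^{\prime}$ separable $K$-algebras: indeed $H = E[N]^{G}$ and $H^{\prime} = E[N^{\prime}]^{G}$ with $|N| = |N^{\prime}| = [L:K]$ dividing $[E:K]$, so Lemma \ref{lem_H_separable} applies to each. This places us squarely within the classical theory of orders in separable algebras, where maximality, heredity, and projectivity of lattices all behave well.

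The central observation is that the commuting actions identify $H^{\prime}$ with an endomorphism ring. Since $L$ is a free $H$-module of rank $1$ (\cite[Proposition 2.15]{ChTwe}), we have $\operatorname{End}_{H}(L) \cong H^{opp}$, and the rule $h^{\prime} \mapsto (y \mapsto h^{\prime}\cdot y)$ defines a $K$-algebra homomorphism $\phi \colon H^{\prime} \to \operatorname{End}_{H}(L)$, well defined precisely because the actions of $H$ and $H^{\prime}$ on $L$ commute. This map is injective, because $L$ is also free of rank $1$ over $H^{\prime}$ and hence a faithful $H^{\prime}$-module; comparing $K$-dimensions (both equal to $[L:K]$) then shows that $\phi$ is an isomorphism. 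I would next verify that $\phi$ carries $\AH^{\prime}$ onto the subring $\{\theta \in \operatorname{End}_{H}(L) : \theta(\B)\subseteq\B\}$, and that restriction to $\B$ identifies this subring with $\operatorname{End}_{\AH}(\B)$: every $\AH$-endomorphism of $\B$ extends uniquely by $K$-linearity to an $H$-endomorphism of $L$ preserving $\B$, since $K\AH = H$ and $K\B = L$. In short $\AH^{\prime} \cong \operatorname{End}_{\AH}(\B)$ as $\OK$-algebras, and symmetrically $\AH \cong \operatorname{End}_{\AH^{\prime}}(\B)$, using $(H^{\prime})^{\prime} = H$ (Lemma \ref{lemma_properties_of_N_opp}).

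With this identification in hand the proposition reduces to a standard fact. Suppose $\AH$ is a maximal order in $H$. Then $\AH$ is hereditary, so the full $\AH$-lattice $\B$ (full because $K\B = L$) is projective, and the endomorphism ring of a full projective lattice over a maximal order in a separable algebra is again a maximal order. This is transparent after reducing to the local case, where $\AH \cong \prod_{i} M_{n_{i}}(\Delta_{i})$ with each $\Delta_{i}$ a maximal order in a division algebra and $\operatorname{End}_{\AH}(\B) \cong \prod_{i} M_{a_{i}}(\Delta_{i})$ (see \cite{Reiner_MO}). Hence $\operatorname{End}_{\AH}(\B)$ is a maximal order in $\operatorname{End}_{H}(L)$, and transporting along $\phi$ shows that $\AH^{\prime}$ is a maximal order in $H^{\prime}$. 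Interchanging the roles of $H$ and $H^{\prime}$ gives the converse. For the global case I would simply note that maximality of an order over a Dedekind domain may be checked locally at each prime, so the whole argument reduces to the local situation; alternatively the lattice-theoretic facts hold over Dedekind domains directly.

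The main obstacle is the second paragraph: setting up $\phi$ and, in particular, verifying carefully that $\phi(\AH^{\prime})$ is exactly $\operatorname{End}_{\AH}(\B)$ — neither larger nor smaller — while keeping track of the opposite-ring bookkeeping. Here I must emphasise that, although the paper has warned that $H^{\prime}$ is not to be naively identified with $H^{opp}$ as a Hopf algebra, the identification used here is only as $K$-algebras and is mediated entirely by the action on $L$, so no contradiction arises. The order-theoretic input of the third paragraph is classical, but I would cite the precise statement — that the endomorphism ring of a full projective lattice over a maximal order is maximal — rather than appeal vaguely to Morita theory, since the relevant lattice need not be a progenerator in general; it is fullness of $\B$ that guarantees every simple component appears, which is exactly what forces the endomorphism ring to be maximal.
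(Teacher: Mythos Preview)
Your argument is correct, but it proceeds along a genuinely different line from the paper's. The paper argues by contradiction: assuming $\AH^{\prime}$ is maximal but $\AH$ is not, it picks a maximal order ${\mathfrak M} \supsetneq \AH$, uses \cite[Theorem~18.10]{Reiner_MO} to obtain a free generator $x$ of $\B$ over $\AH^{\prime}$, invokes Theorem~\ref{thm_main_thm_intro} to see that $x$ also generates $\B$ over $\AH$, and then shows that the associated order of $\Delta = {\mathfrak M}\cdot x$ in $H^{\prime}$ strictly contains $\AH^{\prime}$, contradicting maximality. By contrast, you identify $\AH^{\prime}$ with $\operatorname{End}_{\AH}(\B)$ via the commuting actions and then appeal to the classical fact that the endomorphism ring of a full lattice over a maximal order in a separable algebra is again maximal. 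Your route is more structural and, notably, does not rely on Theorem~\ref{thm_main_thm_intro} at all; it explains the symmetry as an instance of general order theory rather than through a direct lattice construction. The paper's approach stays closer to the explicit generator-based machinery already built in Section~\ref{section_NBG_module_structure}, and so needs less outside input from \cite{Reiner_MO}. Your careful handling of the opposite-ring bookkeeping and the remark that fullness (not progenerator status) is what makes the endomorphism ring maximal are both well placed.
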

\begin{proof}
Since the characteristic of $ K $ does not divide $ [E:K] $, Lemma \ref{lem_H_separable} implies that$ H $ is a separable $ K $-algebra. Thus an $ \OK $-order $ \A $ in $ H $ is maximal if and only if for each prime $ \p $ of $ \OK $ the order $ \OKp \otimes_{\OK} \A $ is a maximal $ \OKp $-order in the separable $ K_{\p} $-algebra $ K_{\p} \otimes_{K} H $ \cite[Corollaries 11.2 and 11.5]{Reiner_MO}. Therefore we may reduce to the case in which $ K $ is a local field. 
\\ \\
Suppose that $ \AH^{\prime} $ is a maximal order but $ \AH $ is not. Since $ H $ is a separable $ K $-algebra, by \cite[Corollary 10.4]{Reiner_MO} $ \AH $ is properly contained in some maximal order in $ H $, say $ {\mathfrak M} $. Since $ L $ is a free $ H^{\prime} $-module of rank $ 1 $, \cite[Theorem 18.10]{Reiner_MO} implies that $ \B $ is a free $ \AH^{\prime} $-module of rank $ 1 $. Let $ x \in \B $ generate $ \B $ as an $ \AH^{\prime} $ module; by Theorem \ref{thm_main_thm_intro} it also generates $ \B $ as an $ \AH $-module. Now let $ \Delta = {\mathfrak M} \cdot x $. This is a full $ \OK $-lattice in $ L $ which properly contains $ \B $, and it is a free $ {\mathfrak M} $-module of rank $ 1 $, generated by $ x $. Let $ \AH^{\prime}(\Delta) $ denote the associated order of $ \Delta $ in $ H^{\prime} $. Similarly to the proof of Theorem \ref{thm_main_thm_intro}, for each $ \mu \in {\mathfrak M} $ define $ z_{\mu} \in H^{\prime} $ by $ z_{\mu} \cdot x = \mu \cdot x $; we find that $ \AH^{\prime}(\Delta) = \{ z_{\mu} \mid \mu \in {\mathfrak M} \} $. But $ \AH^{\prime}(\Delta) \supsetneq \AH^{\prime} $ since
\[ \Delta = \AH^{\prime}(\Delta) \cdot x \supsetneq \AH^{\prime} \cdot x = \B. \]
This contradicts the assumption that $ \AH^{\prime}(\B) $ is a maximal order in $ H $. Therefore $ \AH $ is a maximal order in $ H $. 
\\ \\
Since $ (H^{\prime})^{\prime} = H $, the converse statement follows by interchanging the roles of $ H, H^{\prime} $ in the argument above.
\end{proof}

\bibliographystyle{amsplain}

\end{document}